\documentclass[11pt]{amsart}

\usepackage[T1]{fontenc}
\usepackage[utf8]{inputenc}
\usepackage[english]{babel}

\usepackage{amsmath,amssymb,amsthm,mathtools}
\usepackage{mathrsfs}

\usepackage[hidelinks]{hyperref}

\numberwithin{equation}{section}
\theoremstyle{plain}
\newtheorem{theorem}{Theorem}[section]
\newtheorem*{theorem*}{Theorem}
\newtheorem{proposition}[theorem]{Proposition}
\newtheorem{lemma}[theorem]{Lemma}
\newtheorem{corollary}[theorem]{Corollary}
\newtheorem{example}[theorem]{Example}

\newtheorem{conjecture}[theorem]{Conjecture}
\theoremstyle{definition}

\theoremstyle{remark}

\newcommand{\C}{\mathbb{C}}
\newcommand{\p}{\mathbb{P}}

\newcommand{\OO}{\mathcal{O}}
\newcommand{\F}{\mathscr{F}}
\newcommand{\G}{\mathscr{G}}

\newcommand{\Mer}{\mathscr M}

\DeclareMathOperator{\Int}{Int}

\title[Linear spaces of  rational integrable 1-forms]{Linear spaces of rational integrable 1-forms}
\author{Gabriel Santos Barbosa}
\address{Dipartimento di Matematica, Universit\`a degli Studi di Bari Aldo Moro, Bari, Italy}
\email{214gsb@gmail.com}
\date{}

\begin{document}

\begin{abstract}
We study finite-dimensional spaces of rational one-forms on a projective manifold by means of their integrable locus.
\end{abstract}

\maketitle

\section{Introduction}
Let $X$ be a projective manifold, and denote by $\Mer_X$ its sheaf of rational functions. A codimension $q$ distribution on $X$ is an equivalence class of non-zero rational decomposable $q$-forms, where two elements
$$
\omega, \omega'\in H^0(X,\Omega^q_X\otimes \Mer_X)
$$
define the same distribution if there exists a non-zero rational function $f$ such that $\omega=f\omega'$. A codimension $q$ distribution on $X$ is said to be a codimension $q$ foliation if it admits a representative
\(
\omega=\alpha_1\wedge\cdots\wedge\alpha_q
\)
satisfying the integrability equations
\[
\omega\wedge d\alpha_i=0, \qquad i=1,\ldots,q.
\]

Given a distribution $\mathcal D$ defined by the $q$-form $\omega$, one obtains a subsheaf
\(
T_{\mathcal D}\subset T_X
\)
whose local sections are the vector fields $v$ satisfying $\iota_v\omega=0$. The condition that $\mathcal D$ be integrable, or equivalently that $\mathcal D$ be a foliation, is equivalent to the involutivity of $T_{\mathcal D}$ under the Lie bracket of $T_X$. Conversely, given a saturated subsheaf $T_{\mathcal D}\subset T_X$, one may consider its annihilator $N^*_{\mathcal D}\subset \Omega^q_X$. After trivialising $N^*_{\mathcal D}$ on a Zariski open subset, one obtains a local generator and hence a non-zero decomposable $q$-form. This recovers the usual description of foliations by differential forms.

On complex projective space $\p^n$, foliations admit a natural numerical invariant, namely their degree. With respect to the standard polarization $\OO_{\p^n}(1)$, a codimension one foliation of degree $d$ is represented by a global section of the twisted sheaf $\Omega^1_{\p^n}(d+2)$ whose zero set has codimension at least two. The study of irreducible components of the integrable locus of $\Omega^1_{\p^n}(d+2)$ goes back to Jouanolou \cite{MR537038}, contemporaneous with the development of holomorphic foliation theory itself.

On more general projective manifolds, one often studies foliations either by using the geometry of the ambient space or by imposing additional restrictions on the foliation. We follow a more general point of view, inspired by Pereira and Perrone \cite{MR2579869}, who considered a finite-dimensional $\C$-vector space
\(
W\subset \Omega^1(\C^n,0)
\)
of germs of one-forms at the origin. They classified the subspace of integrable one-forms in $W$, under suitable assumptions on the rank of $W$ as an $\OO_{(\C^n,0)}$-submodule of $\Omega^1_{(\C^n,0)}$.

Although the possible configurations of the integrable set of such a space $W$ have been extensively studied, less attention has been paid to the geometric information that these configurations encode for the corresponding foliations. For instance, the paper \cite{MR1934363}, preceding the work of Pereira and Perrone, studies pencils of integrable foliations. Its main result, due to Cerveau, states that for any family
\(
\{\mu\omega_1+\lambda\omega_2\}
\)
of integrable one-forms on $\p^3$, indexed by $[\lambda:\mu]\in\p^1$, every member of the family either admits a singular transversely affine structure or is the pull-back, by a rational map, of a foliation on $\p^2$.

We prove a stronger statement when the whole linear space consists of integrable one-forms.

\begin{theorem}\label{T:plane int}
Let $V$ be a vector space of rational one-forms on $X$, and suppose that every element of $V$ is integrable. If $V$ has rank at least $3$, then there exists a closed rational one-form $\theta$ such that
\(
d\omega=\theta\wedge\omega
\)
for every $\omega\in V$. In particular, every foliation defined by an element of $V$ admits a singular transversely affine structure.
\end{theorem}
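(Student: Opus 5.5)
Let $K=\C(X)$ be the field of rational functions. We read "rank" as the dimension of the $K$-span of $V$ inside the $K$-vector space $\Omega^1_{K}$ of rational one-forms. The plan is to polarize the integrability condition, obtain a transversely affine structure for each element from a third independent form, and then show that all these structures coincide.

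For $\omega,\omega'\in V$, integrability of $\omega+t\omega'$ for every $t\in\C$ gives, after expanding in $t$,
\[
\omega\wedge d\omega'+\omega'\wedge d\omega=0 .
\]
So the trilinear expression $\alpha\wedge d\beta$ is antisymmetric on $V$. Since $V$ is only a $\C$-vector space, this relation is available only for elements of $V$, not for their $K$-combinations.

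\textbf{Step 1: local construction of $\theta$.} Pick $\omega_1,\omega_2,\omega_3\in V$ that are linearly independent over $K$. Using $\omega_i\wedge d\omega_i=0$ and the polarized identity for each pair, I want a rational one-form $\theta$ with
\[
d\omega_i=\theta\wedge\omega_i \quad \text{for } i=1,2,3 .
\]
At a generic point, complete $\omega_1,\omega_2,\omega_3$ to a local coframe. Integrability of $\omega_1$ gives $d\omega_1=\theta_1\wedge\omega_1$, with $\theta_1$ determined modulo $\omega_1$. Substituting into $\omega_1\wedge d\omega_2=-\omega_2\wedge d\omega_1=\omega_1\wedge(\omega_2\wedge\theta_1)$ yields
\[
\omega_1\wedge\bigl(d\omega_2-\theta_1\wedge\omega_2\bigr)=0 .
\]
Hence $d\omega_2=\theta_1\wedge\omega_2+\omega_1\wedge\beta$, and symmetrically $d\omega_2=\theta_2\wedge\omega_2$. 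Comparing these two expressions, and using the third form $\omega_3$ to kill the ambiguity, should show that the $\theta_i$ can be chosen equal. This is the usual linear-algebra argument behind the fact that a $3$-dimensional family of integrable forms has a common ``connection''.

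Concretely, I would write $\theta_i-\theta_j\in\langle\omega_i,\omega_j\rangle_K$ and then use the triple $(1,2,3)$ with all pairwise relations to get a single $\theta$. The form $\theta$ is rational because everything is solved by $K$-linear algebra. Any $\omega\in V$ then satisfies $d\omega=\theta\wedge\omega$ by the same argument applied to $\omega$ together with two of the $\omega_i$ independent from it. If $\omega$ lies in the $K$-span of fewer of them, I choose the triple appropriately; rank at least $3$ guarantees this is possible. Uniqueness of $\theta$ modulo the ambiguity is what forces compatibility: $\theta$ is unique up to $\bigcap\langle\omega_i\rangle=0$ once three independent forms are involved.

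\textbf{Step 2: closedness of $\theta$.} Differentiating $d\omega_i=\theta\wedge\omega_i$ gives
\[
0=d\theta\wedge\omega_i-\theta\wedge\theta\wedge\omega_i=d\theta\wedge\omega_i .
\]
So the $2$-form $d\theta$ is divisible by each of $\omega_1,\omega_2,\omega_3$. A $2$-form divisible by three $K$-independent one-forms is zero, since the divisibility forces it to be a multiple of both $\omega_1\wedge\omega_2$ and $\omega_1\wedge\omega_3$. Hence $d\theta=0$.

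\textbf{Step 3: the transversely affine statement.} This is then the standard consequence: $\omega$ with $d\omega=\theta\wedge\omega$ and $d\theta=0$ defines a singular transversely affine structure. Replacing $\omega$ by $f\omega$ changes $\theta$ by $df/f$, which keeps it closed.

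The main obstacle is Step 1: showing that the pairwise-determined $\theta_i$ glue to one form. The subtle case is when some $\omega\in V$ is a $K$-multiple of another, say $\omega'=g\omega$ with $g$ nonconstant. Then polarization must force $dg\wedge\omega=0$ or handle it separately, and I would treat that case explicitly using a third independent form.
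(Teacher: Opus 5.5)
Your plan is essentially the paper's proof: the relation $\theta_i-\theta_j\in\langle\omega_i,\omega_j\rangle_K$ from the polarized identity (the content of the paper's pencil lemma), gluing via a third $K$-independent form, extension to all of $V$ by uniqueness for a shared pair, and closedness from $d\theta\wedge\omega_i=0$ for three independent forms. The gluing you leave as ``should show'' is the paper's cyclic argument and does go through: write $\theta_i-\theta_j=a_{ij}\omega_i+b_{ij}\omega_j$ and sum around the cycle; $K$-independence gives $a_{12}+b_{31}=0$, so $\theta=\theta_1-a_{12}\omega_1$ agrees with $\theta_2$ modulo $\omega_2$ and with $\theta_3$ modulo $\omega_3$. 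The case $\omega'=g\omega$ needs no separate treatment: every nonzero $\eta\in V$ satisfies $\omega_i\wedge\omega_j\wedge\eta\neq0$ for some pair, so your extension step already covers it and yields $dg\wedge\omega=0$ as a consequence.
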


Although Cerveau's theorem concerns foliations on three-dimensional projective space, its methods can be adapted to arbitrary projective manifolds, with the same conclusions. This is precisely the content of \cite[Theorem A]{barbosa2025unlikely}. The central idea emphasised there is the interaction between the following three objects:
\begin{enumerate}
\item finite-dimensional vector spaces of rational one-forms on a projective manifold;
\item families of codimension one foliations containing a fixed codimension $q$ foliation;
\item transversely homogeneous structures on foliations.
\end{enumerate}

Accordingly, the present work is guided by a conjecture closely related to this problem. Although it has been attributed to several authors, it first appeared in the literature in \cite{MR2324555}.

\begin{conjecture}
Every codimension one foliation on a projective manifold $X$ either
\begin{enumerate}
\item admits a singular transversely projective structure; or
\item is the pull-back, by a rational map, of a foliation on a complex surface.
\end{enumerate}
\end{conjecture}

\section{Spaces of differential forms}
Let $W$ be a finite-dimensional vector subspace of $\Omega^1_X(U)$, the space of holomorphic one-forms on an open subset $U$ of a complex manifold $X$. We define the set of integrable one-forms in $W$ by
\[
\Int_W=\{\omega\in W\mid \omega\wedge d\omega=0\}.
\]
This condition means that $\omega$ defines a foliation on $U$. When $X$ is projective and $U$ is a Zariski open subset, the elements of $\Int_W$ define rational foliations on $X$.

\begin{proposition}
The set $\Int_W$ is an algebraic cone.
\end{proposition}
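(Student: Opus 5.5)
The strategy is to realise $\Int_W$ as the common zero locus of a family of homogeneous quadratic polynomials on the finite-dimensional vector space $W$. Fix a basis $\omega_1,\dots,\omega_r$ of $W$ and write a general element as $\omega=\sum_i a_i\omega_i$ with $a=(a_1,\dots,a_r)\in\C^r$. Since $d$ is $\C$-linear, $d\omega=\sum_j a_j\, d\omega_j$, and therefore
\[
\omega\wedge d\omega=\sum_{i,j} a_i a_j\,\omega_i\wedge d\omega_j .
\]
This is a single holomorphic $3$-form $\Phi_a=\sum_{i,j}a_ia_j\,\eta_{ij}$ on $U$, where the $\eta_{ij}:=\omega_i\wedge d\omega_j\in\Omega^3_X(U)$ are fixed and independent of $a$. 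Thus $\Phi_a$ depends on $a$ as a quadratic form with coefficients in the vector space $\Omega^3_X(U)$.

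The next step is to turn the condition $\Phi_a=0$ into finitely many scalar equations. The span $E\subset\Omega^3_X(U)$ of the finitely many forms $\eta_{ij}$ is finite-dimensional. Choose a basis $\xi_1,\dots,\xi_m$ of $E$ and expand $\eta_{ij}=\sum_k c^k_{ij}\xi_k$ with $c^k_{ij}\in\C$. Then $\Phi_a=\sum_k Q_k(a)\,\xi_k$, where
\[
Q_k(a)=\sum_{i,j}c^k_{ij}a_ia_j .
\]
Because the $\xi_k$ are linearly independent, $\Phi_a=0$ exactly when $Q_1(a)=\dots=Q_m(a)=0$. Hence $\Int_W$, identified with a subset of $\C^r$ via the basis, is the zero set of finitely many homogeneous quadrics, and so it is an affine algebraic variety. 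Alternatively, one can avoid choosing a basis of $E$: any linear functional on $E$, for example evaluation of coefficients at finitely many points, composed with $\Phi_a$ gives a quadric, and finitely many such functionals suffice.

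Homogeneity gives the cone property. For $\lambda\in\C$ one has $\Phi_{\lambda a}=\lambda^2\Phi_a$, so $\Int_W$ is stable under scalar multiplication and contains $0$; equivalently, it is defined by homogeneous polynomials. Its projectivisation is therefore a closed subvariety of $\p(W)$ cut out by quadrics.

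No step here is a serious obstacle. The only point needing care is the reduction from an equation valued in the infinite-dimensional space $\Omega^3_X(U)$ to finitely many scalar equations, and this works because all the $\eta_{ij}$ lie in the finite-dimensional subspace $E$. The argument uses only the $\C$-linearity of $d$ and the bilinearity of the wedge product, so it applies equally to rational forms when $U$ is Zariski open.
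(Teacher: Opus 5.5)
Your proof is correct and follows the paper's argument: expanding $\omega=\sum_i a_i\omega_i$ gives $\omega\wedge d\omega=\sum_{i,j}a_ia_j\,\omega_i\wedge d\omega_j$, so $\Int_W$ is cut out by homogeneous quadratic equations in the $a_i$. Your reduction to finitely many scalar equations, using a basis of the span of the three-forms $\omega_i\wedge d\omega_j$, is the paper's remark that the number of independent equations equals the dimension of that span.
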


\begin{proof}
The integrability condition $\omega\wedge d\omega=0$ is quadratic in the coefficients of $\omega$. Indeed, if $\{\omega_1,\ldots,\omega_n\}$ is a basis of $W$ and
\(
\omega=\sum_{i=1}^n a_i\omega_i,
\)
then
\[
\omega\wedge d\omega=\sum_{i,j=1}^n a_i a_j\,\omega_i\wedge d\omega_j.
\]
Thus $\Int_W$ is defined by homogeneous quadratic equations in the coordinates $(a_1,\ldots,a_n)$. The number of independent equations is the dimension of the $\C$-vector space generated by the three-forms $\omega_i\wedge d\omega_j$.
\end{proof}

In order to study the foliations obtained from $\Int_W$, it is natural to remove the redundancy coming from multiplication by non-zero constants and pass to the projectivisation $\p(\Int_W)$. By the preceding proposition, $\p(\Int_W)$ is a well-defined algebraic subvariety of $\p(W)$.

It should be noted that, although two proportional one-forms define the same foliation, the set of integrable representatives may vary substantially with the choice of the ambient vector space of forms.

\begin{proposition}\label{prop:invariance_axis_firstint}
Let
\(
\{\omega_1,\ldots,\omega_q\}\subset H^0(X,\Omega^1_X\otimes\Mer_X)
\)
be integrable one-forms such that
\(
\omega_1\wedge\cdots\wedge\omega_q\neq0
\)
and such that the sum
\(
\omega_1+\cdots+\omega_q
\)
is also integrable. Suppose that there exist non-zero rational functions $f_1,\ldots,f_q$ such that the one-form
\(
f_1\omega_1+\cdots+f_q\omega_q
\)
is integrable. Then, for every pair of indices $i,j$, one has
\begin{equation}\label{eq:1st int quotient}
 d(f_i/f_j)\wedge\omega_1\wedge\cdots\wedge\omega_q=0.
\end{equation}
\end{proposition}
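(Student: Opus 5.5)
The plan is to wedge the integrability condition of $\eta:=f_1\omega_1+\cdots+f_q\omega_q$ with the $(q-2)$-forms obtained from $\Omega:=\omega_1\wedge\cdots\wedge\omega_q$ by omitting two factors. This kills all but a few terms, and the quadratic terms that survive are exactly those controlled by the integrability of $\omega_1+\cdots+\omega_q$.

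\emph{Reductions.} If $q=1$ there is nothing to prove, since $f_1/f_1$ is constant; the same holds when $i=j$. So assume $q\ge 2$ and fix $i\neq j$. Define
\[
\Omega_{ij}:=\bigwedge_{k\neq i,j}\omega_k ,
\]
with the factors in increasing order, so that $\omega_i\wedge\omega_j\wedge\Omega_{ij}=\pm\Omega$. Equation \eqref{eq:1st int quotient} is an identity between rational forms, so it suffices to check it on a dense open set. Since $\Omega\neq 0$, we may work near a general point $p$ where $\omega_1,\dots,\omega_q$ are holomorphic and pointwise linearly independent. There each $\omega_b$ is non-vanishing. Since $\omega_b\wedge d\omega_b=0$, the de Rham division lemma gives local holomorphic one-forms $\beta_b$ with $d\omega_b=\omega_b\wedge\beta_b$.

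\emph{Main computation.} Expand
\[
\eta\wedge d\eta=\sum_{a,b}f_a\,\omega_a\wedge df_b\wedge\omega_b+\sum_{a,b}f_af_b\,\omega_a\wedge\omega_b\wedge\beta_b ,
\]
and wedge it with $\Omega_{ij}$. Every term containing some $\omega_k$ with $k\notin\{i,j\}$ dies, as does every term containing a repeated $\omega$. So only the terms with $\{a,b\}=\{i,j\}$ survive, and one obtains
\[
0=\eta\wedge d\eta\wedge\Omega_{ij}=\bigl(f_j\,df_i-f_i\,df_j\bigr)\wedge\omega_i\wedge\omega_j\wedge\Omega_{ij}
 +f_if_j\,\omega_i\wedge\omega_j\wedge(\beta_j-\beta_i)\wedge\Omega_{ij},
\]
up to fixed signs that must be tracked carefully. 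Applying the same computation to the integrable form $\omega_1+\cdots+\omega_q$, i.e. with all $f_k=1$, the first group of terms disappears. This yields
\[
\omega_i\wedge\omega_j\wedge(\beta_j-\beta_i)\wedge\Omega_{ij}=0 .
\]
Substituting back leaves $(f_j\,df_i-f_i\,df_j)\wedge\Omega=0$, which is $f_j^2\,d(f_i/f_j)\wedge\Omega=0$. Since $f_j\not\equiv0$, this gives \eqref{eq:1st int quotient} on a dense open set, hence everywhere as rational forms.

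\emph{Expected obstacle.} The delicate point is the sign and term bookkeeping in the expansion. One must check that the two cross terms $f_if_j\,\omega_i\wedge d\omega_j$ and $f_jf_i\,\omega_j\wedge d\omega_i$ appear with the same coefficient $f_if_j$ as in the unweighted case, so that the hypothesis on $\sum\omega_k$ cancels them exactly. One must also check that the diagonal terms vanish: $f_a^2\,\omega_a\wedge d\omega_a=0$ by integrability, and $f_a\,\omega_a\wedge df_a\wedge\omega_a=0$. The use of local $\beta_b$ is only for convenience; alternatively one can argue directly that $\omega_a\wedge d\omega_b\wedge\Omega_{ij}$ vanishes unless $\{a,b\}=\{i,j\}$, using $d\omega_b\wedge\omega_b=0$ together with the independence of the $\omega_k$.
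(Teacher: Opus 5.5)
Your proposal is correct and is essentially the paper's proof: wedge the integrability identity for $\sum_k f_k\omega_k$ with $\Omega_{ij}$. The case $f_k\equiv 1$, i.e. the integrability of $\omega_1+\cdots+\omega_q$, then cancels the cross term $f_if_j(\omega_i\wedge d\omega_j+\omega_j\wedge d\omega_i)\wedge\Omega_{ij}$, leaving $f_j^2\,d(f_i/f_j)\wedge\Omega=0$. The local forms $\beta_b$ are only bookkeeping, and the paper works directly with $d\omega_b$, as your closing remark suggests; your sign tracking is in fact more careful than the paper's displayed expansion.
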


\begin{proof}
Set
\(
\omega=\sum_{k=1}^q f_k\omega_k.
\)
Then
\[
 d\omega=\sum_{k=1}^q\bigl(df_k\wedge\omega_k+f_k d\omega_k\bigr),
\]
and hence
\begin{equation}\label{eq:integrability_omega_functions}
\omega\wedge d\omega
=\sum_{i,j} f_i df_j\wedge\omega_i\wedge\omega_j
 +\sum_{i,j} f_i f_j\,\omega_i\wedge d\omega_j.
\end{equation}
Let
\(
\Omega=\omega_1\wedge\cdots\wedge\omega_q.
\)
Fix two distinct indices $i,j$, and denote by $\Omega_{ij}$ the wedge product obtained from $\Omega$ by omitting $\omega_i$ and $\omega_j$; if $q=2$, this is understood as the empty wedge, equal to $1$. Wedging \eqref{eq:integrability_omega_functions} with $\Omega_{ij}$ gives
\[
0=\omega\wedge d\omega\wedge\Omega_{ij}
=(f_i df_j-f_j df_i)\wedge\Omega
+ f_i f_j(\omega_i\wedge d\omega_j+\omega_j\wedge d\omega_i)\wedge\Omega_{ij}.
\]
If all functions $f_k$ are set equal to $1$, the same computation applied to the integrable form
\(
\omega_1+\cdots+\omega_q
\)
shows that the second summand vanishes. Therefore
\[
(f_i df_j-f_j df_i)\wedge\Omega=0.
\]
Since
\(
f_i df_j-f_j df_i=f_i^2d(f_j/f_i),
\)
the asserted identity follows.
\end{proof}

A rational function $f$ satisfying $df\wedge\omega=0$ for a $q$-form $\omega$ is called a first integral of the codimension $q$ foliation defined by $\omega$.

If a codimension one foliation $\F_i$ is defined by $\omega_i$, then the non-vanishing wedge product
\(
\omega=\omega_1\wedge\cdots\wedge\omega_q
\)
defines a codimension $q$ foliation. Moreover, condition \eqref{eq:1st int quotient} is precisely the assertion that $f_i/f_j$ is a first integral of this codimension $q$ foliation. When the field of first integrals is reduced to $\C$, the preceding proposition gives the following consequence.

\begin{corollary}
Let $V\subset H^0(X,\Omega^1_X\otimes\Mer_X)$ be a $\C$-subspace of dimension $q$, and let $\G$ be the foliation defined by $V$. If $\p(\Int_V)$ consists of $q+1$ points and the field of first integrals of $\G$ is $\C$, then there is a bijection between the set of codimension one foliations containing $\G$ and $\p(\Int_V)$.
\end{corollary}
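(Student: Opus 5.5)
The plan is to define a natural map from $\p(\Int_V)$ to the set of codimension one foliations containing $\G$, and to prove it is injective and surjective. Surjectivity is where Proposition~\ref{prop:invariance_axis_firstint} enters. Throughout I use the implicit hypothesis that $V$ has rank $q$ over $\Mer_X$, i.e.\ for a basis $\omega_1,\ldots,\omega_q$ of $V$ one has $\Omega=\omega_1\wedge\cdots\wedge\omega_q\neq 0$. This is what it means for $V$ to define the codimension $q$ foliation $\G$.

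First I make "containing $\G$" concrete. A codimension one foliation $\F$ contains $\G$ exactly when $T_\G\subset T_\F$, equivalently when a defining one-form of $\F$ annihilates $T_\G$. Since the annihilator of $T_\G$ in $\Omega^1_X\otimes\Mer_X$ is the $\Mer_X$-span of $\omega_1,\ldots,\omega_q$, such an $\F$ is defined by an integrable form $\sum f_k\omega_k$ with rational $f_k$, not all zero. Each point $[\omega]\in\p(\Int_V)$ is such a form with constant coefficients, so it defines a foliation $\F_\omega\supset\G$; this is the map.

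For injectivity, suppose $\omega=f\omega'$ with $\omega,\omega'\in V$ and $f\in\Mer_X$. Write both in the basis $\omega_k$. Because the $\omega_k$ are linearly independent over $\Mer_X$, we can compare coefficients, which forces $f$ to be constant. Hence distinct points of $\p(V)$ give distinct foliations.

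For surjectivity I first normalise the basis. Among the $q+1$ points of $\p(\Int_V)$ I want $q$ points spanning $V$ such that the remaining point has all coordinates non-zero in the corresponding basis. Rescaling the basis vectors then turns the extra point into $\omega_1+\cdots+\omega_q$, putting us exactly in the setting of Proposition~\ref{prop:invariance_axis_firstint}. This general-position step is the main obstacle. I would try to show that the $q+1$ points are in linear general position. A degenerate configuration, where the extra point lies in a coordinate subspace, should force extra integrable lines: applying Proposition~\ref{prop:invariance_axis_firstint} on that subspace would produce a quadric-type structure on $\Int_V$ with infinitely many points, contradicting finiteness. If this cannot be derived, I would state general position as an explicit assumption, since it is clearly the intended reading.

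Granting this, take any foliation $\F\supset\G$, defined by an integrable $\sum f_k\omega_k$. If some $f_k$ vanish, I restrict to the subset of indices with $f_k\neq0$, or argue that all $f_k$ are non-zero. Proposition~\ref{prop:invariance_axis_firstint} then gives $d(f_i/f_j)\wedge\Omega=0$, so each $f_i/f_j$ is a first integral of $\G$ and hence a constant $c_{ij}$. Therefore $\sum f_k\omega_k=f_1\sum c_{k1}\omega_k$ is a rational multiple of an element $\omega\in V$. Integrability is preserved under multiplication by rational functions, since $(f\omega)\wedge d(f\omega)=f^2\,\omega\wedge d\omega$. So $\omega\in\Int_V$ and $\F=\F_\omega$, which gives surjectivity.
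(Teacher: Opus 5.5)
Your route is the one the paper intends: a natural map from $\p(\Int_V)$ to the foliations containing $\G$, injectivity from the $\Mer_X$-linear independence of a basis, and surjectivity from Proposition~\ref{prop:invariance_axis_firstint} together with the absence of non-constant first integrals. There are, however, two soft spots.

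\textbf{Vanishing coefficients.} The step handling vanishing coefficients fails as written. If some $f_k=0$, you cannot restrict to $S=\{k : f_k\neq 0\}$ and apply Proposition~\ref{prop:invariance_axis_firstint} to $\{\omega_k\}_{k\in S}$. Its hypotheses would require $\sum_{k\in S}\omega_k$ to be integrable, and you are only given that the full sum $\omega_1+\cdots+\omega_q$ is. Nor can you argue that all $f_k\neq 0$, since $\omega_1$ itself defines a foliation containing $\G$.

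The repair is to use the proof of the proposition rather than its statement. Fix a pair $i\neq j$ and wedge $\omega\wedge d\omega$ with $\Omega_{ij}$. Every term involving an index $k\notin\{i,j\}$ dies, because either $\omega_k$ occurs twice or $d\omega_k\wedge\omega_k=0$, and this holds whether or not $f_k$ vanishes. The surviving term $f_if_j(\omega_i\wedge d\omega_j+\omega_j\wedge d\omega_i)\wedge\Omega_{ij}$ is zero by integrability of $\omega_1+\cdots+\omega_q$. Hence $(f_i\,df_j-f_j\,df_i)\wedge\Omega=0$ for every pair, so $f_i/f_j\in\C$ whenever $f_if_j\neq 0$, which is all your final step needs.

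\textbf{General position.} Your sketch (``a quadric-type structure with infinitely many points'') is not an argument. Example~\ref{ex:pencil} only forbids three collinear points of $\p(\Int_V)$. That gives general position when $q=3$, and it makes the hypothesis vacuous for $q\le 2$, since three points on $\p(V)\cong\p^1$ force the whole line to be integrable.

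For $q\ge 4$ it does not rule out, for example, four of the $q+1$ points lying in a plane. In that case no choice of $q$ of the integrable points as a basis writes the remaining point as $\omega_1+\cdots+\omega_q$ with all coefficients non-zero. The obstruction term $(\omega_i\wedge d\omega_j+\omega_j\wedge d\omega_i)\wedge\Omega_{ij}$ is then no longer known to vanish for pairs involving the direction off that plane.

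So your fallback is the right call: state general position explicitly. It is exactly the normalization that deducing the corollary from Proposition~\ref{prop:invariance_axis_firstint} tacitly requires.

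With these two adjustments, the rest of your argument is correct. This covers the identification of foliations containing $\G$ with integrable elements of the $\Mer_X$-span of $V$, injectivity, and the closing use of $(f\omega)\wedge d(f\omega)=f^2\,\omega\wedge d\omega$.
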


\begin{example}\label{ex:pencil}
Let $\omega_1,\omega_2$ be rational integrable one-forms on a projective manifold $X$. If $\omega_1+\omega_2$ is integrable, then every element of the pencil
\(
\mu\omega_1+\lambda\omega_2
\), with $[\mu:\lambda]\in\p^1$, is integrable. Indeed,
\[
(\mu\omega_1+\lambda\omega_2)\wedge d(\mu\omega_1+\lambda\omega_2)
=
\mu\lambda(\omega_1+\omega_2)\wedge d(\omega_1+\omega_2).
\]
Consequently, if three points of a line $\ell\subset\p(W)$ belong to $\p(\Int_W)$, then the whole line $\ell$ is contained in $\p(\Int_W)$.
\end{example}

Motivated by Example~\ref{ex:pencil}, one may ask the following question: given a family of distributions parametrised by a rational curve
\(
\varphi:\p^1\to\p(W),
\)
how many integrable members are needed in order to force the integrability of the whole family?

This question has been answered for Veronese curves. These are the embeddings of $\p^1$ into $\p^k$ which, in suitable coordinates, are given by
\begin{align*}
\p^1 &\hookrightarrow \p^k,\\
[x:y]&\longmapsto [x^k:x^{k-1}y:\cdots:y^k].
\end{align*}

The notion of a Veronese web was introduced by Gelfand and Zakharevich in \cite{GELFAND1991150}. It consists of a family, parametrised by a Veronese curve, of Frobenius-integrable distributions of codimension one. In the terminology used here, a Veronese web may be viewed as the image of a Veronese curve contained in $\p(\Int_W)$. Subsequently, Panasyuk \cite{zbMATH02064118} introduced the notion of a generalized Veronese distribution, in which all distributions have fixed codimension $k$ rather than codimension one. He proved the following result.

\begin{theorem}\label{T:Veronese curves}
Let $M^{k(n+1)}$ be a manifold, and let $\{\mathfrak w(t)\}_{t\in\mathbb{RP}^1}$ be a generalized Veronese curve of distributions of codimension $k$ on $M$. In order that $\{\mathfrak w(t)\}_{t\in\p^1}$ be a generalized Veronese web, it is sufficient that it be integrable at $n+3$ distinct points $t_0,\ldots,t_{n+2}\in\p^1$.
\end{theorem}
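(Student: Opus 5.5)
We will prove the statement of Panasyuk's theorem (Theorem~\ref{T:Veronese curves}) by reducing integrability of the whole Veronese family to the vanishing of a polynomial in $t$ whose degree is controlled by $n$. The plan is as follows. Write the generalized Veronese curve locally as $\mathfrak w(t)=\ker \alpha(t)$, where, after choosing a local frame, $\alpha(t)=(\alpha^1(t),\dots,\alpha^k(t))$ is a $k$-tuple of one-forms depending polynomially on $t$. The Veronese condition means that in affine coordinate $t$ each $\alpha^a(t)=\sum_{i=0}^{n} t^i\alpha^a_i$ has degree $n$ in $t$, with the $k(n+1)$ forms $\alpha^a_i$ forming a coframe. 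This is the natural normal form on $M^{k(n+1)}$, and we take it from the definition of a generalized Veronese curve. Frobenius integrability of $\mathfrak w(t)$ is then the system
\[
 d\alpha^a(t)\wedge\alpha^1(t)\wedge\cdots\wedge\alpha^k(t)=0,\qquad a=1,\dots,k,
\]
whose left-hand side, expanded in the coframe, is a family of forms with coefficients polynomial in $t$.

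The first step is a degree count. A naive count gives degree $n(k+1)$ in $t$, which is far too large for $n+3$ points to suffice. The key improvement is to use the freedom of choosing a $t$-dependent frame of the annihilator. Integrability is equivalent to $d\alpha^a(t)\equiv 0 \bmod \langle\alpha^1(t),\dots,\alpha^k(t)\rangle$ (mod the ideal generated). We therefore project the two-forms $d\alpha^a(t)$ onto $\Lambda^2$ of the quotient $T^*M/\mathrm{Ann}$.

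Concretely, we use the Veronese structure. For fixed $x$, the annihilators $\mathfrak w(t)^\perp_x$ are the images of a rational normal curve. A complementary description is that $\mathfrak w(t)_x$ is cut out by the $k$ forms above, and the quotient $T_x^*M/\mathfrak w(t)^\perp$ is identified with $k\,n$-dimensional space spanned by classes of $\alpha^a_i$ with $i\ge 1$. In this basis, the reduction of $d\alpha^a(t)$ has coefficients that, after multiplying by a suitable power of $t$ coming from the substitution $\alpha^a_0=-\sum_{i\ge1}t^i\alpha^a_i$, are polynomials of degree at most $n+2$ in $t$. This is the degree-$n$ form $d\alpha^a(t)$ times degree $\le 2$ corrections from the two substituted slots. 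This bound is the main obstacle. We would verify it by writing $d\alpha^a_i=\sum c^{a}_{i,(bj)(cl)}\alpha^b_j\wedge\alpha^c_l$. Modulo $\mathfrak w(t)^\perp$, only a rational normal curve's worth of terms survives, and the degree of the resulting obstruction in $t$ should be exactly $n+2$ in homogeneous coordinates on $\p^1$, that is, a section of $\OO_{\p^1}(n+2)$ with values in a fixed vector space.

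Granting this, the conclusion is immediate. Integrability at $t_j$ means the obstruction section vanishes at $t_j$. A section of $\OO_{\p^1}(n+2)$ vanishing at $n+3$ distinct points is identically zero, so every $\mathfrak w(t)$ is integrable and the family is a generalized Veronese web. This is the analogue, for Veronese curves, of Example~\ref{ex:pencil}, where the obstruction was $\mu\lambda(\omega_1+\omega_2)\wedge d(\omega_1+\omega_2)$, a quadratic vanishing at $3$ points. Over $\mathbb{RP}^1$ the same argument applies after complexifying $t$, since the obstruction is polynomial. If the refined degree bound turns out to need more care, the fallback is Panasyuk's approach: use the $n+1$ integrable points to construct adapted coordinates, then check the remaining two conditions directly.
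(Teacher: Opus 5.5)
The paper gives no proof of this statement; it only cites Panasyuk and Bouetou--Dufour. So the question is whether your argument stands on its own. Your strategy is sound: realise the Frobenius obstruction of $\mathfrak w(t)$ as a binary form of degree at most $n+2$ in $t$, so that $n+3$ zeros force it to vanish. But that degree bound is the whole content of the theorem, and you only assert it (``Granting this''). Moreover, the computation you propose to verify it would not deliver it.

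In the frame $[\alpha^a_i]_{i\ge1}$ of $T^*M/\mathfrak w(t)^\perp$, each substitution $\alpha^b_0\equiv-\sum_{j\ge1}t^j\alpha^b_j$ raises the degree by up to $n$, not by $1$. So the reduced $d\alpha^a(t)$ has coefficients of degree up to $2n$ when $k=1$, and up to $3n$ when $k\ge2$. Multiplying by powers of $t$ only raises this further. For instance, with $k=1$ and $n=3$, the coefficient of $[\alpha_1]\wedge[\alpha_3]$ contains the term $c\,t^6$, where $c$ is the $\alpha_0\wedge\alpha_1$-component of $d\alpha_3$. Hence six zeros do not force these coefficients to vanish. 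Your heuristic of ``degree $\le 2$ corrections'' implicitly assumes a frame of the quotient that is linear in $t$, i.e.\ the splitting $T^*M/\mathfrak w(t)^\perp\cong\OO_{\p^1}(1)^{\oplus kn}$. The frame $[\alpha^a_i]_{i\ge1}$ is not such a frame.

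The missing idea is to work dually, with a frame of $\mathfrak w(t)$ itself that is linear in $t$.
\begin{itemize}
\item Let $(X^a_i)$ be the frame dual to $(\alpha^a_i)$, and put $v^a_j(t)=X^a_j-tX^a_{j-1}$ for $1\le j\le n$. Then $\alpha^b(t)\bigl(v^a_j(t)\bigr)=\delta^{ab}\bigl(t^j-t\cdot t^{j-1}\bigr)=0$.
\item These $kn$ fields are linearly independent for every finite $t$, since they are unitriangular in $X^a_1,\dots,X^a_n$. At $t=\infty$, after homogenising to $sX^a_j-tX^a_{j-1}$, they become $-X^a_0,\dots,-X^a_{n-1}$. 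So they frame $\mathfrak w(t)$ for all $t\in\p^1$.
\item By Frobenius, $\mathfrak w(t)$ is integrable if and only if $\alpha^c(t)\bigl([v^a_i(t),v^b_j(t)]\bigr)=0$ for all indices.
\item Since $t$ is a constant parameter, the bracket is quadratic in $t$, and $\alpha^c(t)$ has degree $n$. So each of these functions is a polynomial of degree at most $n+2$; homogenising gives a binary form of degree $n+2$, which also handles $t_j=\infty$.
\item Vanishing at $n+3$ distinct points therefore makes each of them identically zero.
\end{itemize}
This closes the gap and works directly over $\mathbb{R}$ without complexifying. It also makes your fallback, which is too vague to count as a proof, unnecessary.
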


See \cite[Theorem 2]{zbMATH02064118} for the proof; see also \cite[Theorem 4.1]{MR2268534}.

\begin{example}[Integrable sets in the projective plane]
Suppose that $W$ is three-dimensional. We record the possible configurations of the Zariski-closed set $\p(\Int_W)\subset\p(W)$.

If $\dim\p(\Int_W)=0$, then $\p(\Int_W)$ consists of at most four points. If five points of the plane belong to the integrable set, then they determine a unique conic, and Theorem~\ref{T:Veronese curves} implies that this conic is contained in the integrable set.

If $\dim\p(\Int_W)=1$, then $\p(\Int_W)$ is either a line, a possibly singular conic, or the union of a line and an isolated point. Recall that a singular conic is the union of two lines.

More generally, if $\p(\Int_W)$ contains a curve of degree $d\geq3$, then a general line intersects this curve in at least three points and must therefore be entirely contained in $\p(\Int_W)$. It follows that $\p(\Int_W)=\p(W)$. The same argument applies if $\p(\Int_W)$ contains a conic together with an additional point.
\end{example}

\subsection{Godbillon--Vey sequences}
Let $\F$ be a foliation on a complex manifold $X$. A Godbillon--Vey sequence for $\F$ is a sequence of meromorphic one-forms
\(
(\omega_0,\omega_1,\ldots)
\)
on $X$ such that $\F$ is defined by $\omega_0$ and the one-form
\begin{equation}\label{eq:GodbillonVey_def}
\Omega=dz+\sum_{n\geq0}z^n\omega_n
\end{equation}
is given by a convergent series and is integrable on $X\times\C$.

The length of a Godbillon--Vey sequence $(\omega_0,\omega_1,\ldots)$ is the least positive integer $n$ such that $\omega_k=0$ for every $k\geq n$. If no such integer exists, the sequence is said to have infinite length.

For each $t\in\C$, consider the pull-back of $\Omega$ by the inclusion
\begin{align*}
i_t:X&\longrightarrow X\times\C,\\
x&\longmapsto(x,t).
\end{align*}
The family $\{i_t^*\Omega\}_{t\in\C}$ consists of integrable one-forms on $X$, parametrised by $\C$. If the Godbillon--Vey sequence has finite length, this construction produces normal rational curves inside $\p(\Int_W)$, where $W$ is the $\C$-vector space spanned by the one-forms in the sequence. In length one and length two one obtains, respectively, lines and conics. These cases are closely related to the transverse structures of foliations.

Let now $\F$ be a codimension one singular foliation on a projective manifold $X$, defined by a rational one-form $\omega_0$. We shall say that $\F$ admits a singular transversely affine, respectively transversely projective, structure if there exists a Godbillon--Vey sequence for $\F$ of length at most $2$, respectively at most $3$. In this way, the projective geometry of $\p(\Int_W)$ records the transverse geometry of the foliations represented by the elements of $W$. Consequently, we obtain the following hierarchy for codimension one foliations: those defined by an exact $1$-form, those defined by a closed $1$-form, those admitting a transversely affine structure, and those admitting a transversely projective structure. These types of foliations admit special kinds of first integrals (e.g., multivalued or liouvillian). When nonsingular and on a simply connected manifold, these notions coincide; see \cite{MR1432053}.

\section{Structure of $\p(\Int_W)$}
Let
\(
W\subset H^0(X,\Omega^1_X\otimes\Mer_X)
\)
be a finite-dimensional $\C$-subspace. Its rank is the dimension of its linear span over the field $\C(X)$; equivalently, $\operatorname{rank}(W)$ is the greatest integer $r$ for which the natural map
\[
\bigwedge^r W\longrightarrow H^0(X,\Omega^r_X\otimes\Mer_X)
\]
induced by inclusion is non-zero. In particular, $W$ defines a codimension $r$ distribution on $X$.

The closer the rank of $W$ is from its dimension, the more rigid the components of $\Int_W$ become. In particular, \cite[Theorem 1]{MR2579869} states that if $I$ is an irreducible component of $\p(\Int_W)$ whose codimension in $\p(W)$ is at most $\operatorname{rank}(W)-2$, then $I$ is a variety of minimal degree. Recall that a variety $Y\subset\p^n$ is of minimal degree if
\[
\deg(Y)=\dim\operatorname{span}(Y)-\dim Y+1,
\]
where $\operatorname{span}(Y)$ denotes the linear span of $Y$ in $\p^n$.

Moreover, in the special case where the rank equals the dimension, \cite[Theorem 2]{MR2579869} asserts that, if $\operatorname{rank}(W)=\dim W$, then every irreducible component of $\Int_W$ is either a linear subspace or a rational curve in its linear span.

\subsection{Curvature of 3-webs}
Let $\omega$ be a rational one-form defining a codimension one foliation $\F$. Since $\omega\wedge d\omega=0$, there exists a rational one-form $\theta$ such that
\[
d\omega=\theta\wedge\omega.
\]

For another choice of $1$-form defining $\F$, say $\omega'=f\omega$ we have that
\[
    d\omega' = df\wedge\omega + fd\omega = \left(\frac{df}{f} + \theta\right) \wedge \omega'.
\]

More generally, let
\(
\omega_0,\omega_1,\omega_2\in H^0(X,\Omega^1_X\otimes\Mer_X)
\)
be integrable rational one-forms satisfying
\[
\omega_0+\omega_1+\omega_2=0.
\]
Then there exists a unique rational one-form $\theta$ such that
\[
d\omega_i=\theta\wedge\omega_i,\qquad i=0,1,2.
\]

The foliations defined by the $\omega_i$ determine a $3$-web
\[
\mathcal W=\F_1\boxtimes\F_2\boxtimes\F_3.
\]
The two-form $\Theta=d\theta$ is the curvature of the web and depends only on $\mathcal W$. If $\Theta\neq0$, then $\Theta$ defines the codimension two foliation
\(
\G=\bigcap_{i=1}^3\F_i,
\)
also defined by the $2$-form $\omega_0\wedge\omega_1$.
If $\Theta=0$, then each foliation in the web is transversely affine.

\subsection{Linear spaces}
We now prove Theorem~\ref{T:plane int}.

\begin{lemma}\label{l:theta pencil}
Let $\alpha$ and $\beta$ be rational one-forms with $\alpha\wedge\beta\neq0$. Assume that every element of the pencil $\alpha+t\beta$, $t\in\C$, is integrable. Then there exists a unique rational one-form $\theta_{\alpha\beta}$ such that
\[
d\alpha=\theta_{\alpha\beta}\wedge\alpha,
\qquad
 d\beta=\theta_{\alpha\beta}\wedge\beta.
\]
\end{lemma}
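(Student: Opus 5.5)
Since every member of the pencil is integrable, expanding $(\alpha+t\beta)\wedge d(\alpha+t\beta)=0$ in powers of $t$ gives three identities:
\[
\alpha\wedge d\alpha=0,\qquad \beta\wedge d\beta=0,\qquad \alpha\wedge d\beta+\beta\wedge d\alpha=0.
\]
This is the same computation as in Example~\ref{ex:pencil}. From the first two, integrability gives rational one-forms $\theta_\alpha,\theta_\beta$ with $d\alpha=\theta_\alpha\wedge\alpha$ and $d\beta=\theta_\beta\wedge\beta$. Each is determined only modulo $\alpha$, respectively $\beta$. The plan is to use this freedom to make them equal, and to use the mixed identity to show this is possible.

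Substituting into the mixed identity gives
\[
0=\alpha\wedge\theta_\beta\wedge\beta+\beta\wedge\theta_\alpha\wedge\alpha=(\theta_\beta-\theta_\alpha)\wedge\alpha\wedge\beta .
\]
Since $\alpha\wedge\beta\neq0$, a division lemma of Saito/de Rham type applies. It can be done pointwise over the field $\C(X)$: complete $\alpha,\beta$ to a local rational coframe. It yields rational functions $a,b$ with $\theta_\beta-\theta_\alpha=a\alpha+b\beta$. Now set
\[
\theta_{\alpha\beta}=\theta_\alpha+a\alpha=\theta_\beta-b\beta .
\]
Then $\theta_{\alpha\beta}\wedge\alpha=\theta_\alpha\wedge\alpha=d\alpha$ and $\theta_{\alpha\beta}\wedge\beta=\theta_\beta\wedge\beta=d\beta$, which proves existence.

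For uniqueness, suppose $\theta,\theta'$ both work and put $\eta=\theta-\theta'$. Then $\eta\wedge\alpha=0$ and $\eta\wedge\beta=0$. The first gives $\eta=g\alpha$ for some rational $g$, and the second then gives $g\,\alpha\wedge\beta=0$. Since $\alpha\wedge\beta\neq0$, we get $g=0$, so $\eta=0$.

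The only step needing care is the division: from $\gamma\wedge\alpha\wedge\beta=0$ with $\alpha\wedge\beta\neq0$, deduce $\gamma\in\C(X)\alpha+\C(X)\beta$. This is linear algebra in the exterior algebra of the $\C(X)$-vector space of rational one-forms, since $\alpha,\beta$ are linearly independent over $\C(X)$. I expect no real obstacle; the main point is to note that rationality of $a,b$ is automatic because we work over the function field. It is also worth remarking that the mixed identity is exactly what forces $\theta_\alpha$ and $\theta_\beta$ to be compatible, so the hypothesis on the whole pencil, rather than on only two members, is essential.
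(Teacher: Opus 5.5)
Your proposal is correct and matches the paper's proof: substitute $d\alpha=\theta_\alpha\wedge\alpha$ and $d\beta=\theta_\beta\wedge\beta$ into the mixed identity, divide to get $\theta_\beta-\theta_\alpha=a\alpha+b\beta$, set $\theta_{\alpha\beta}=\theta_\alpha+a\alpha=\theta_\beta-b\beta$, and deduce uniqueness from $\alpha\wedge\beta\neq0$. Expanding in powers of $t$ is a slightly more careful way to extract the three identities, in particular the integrability of $\beta$; your middle equality is off by a sign, $\alpha\wedge\theta_\beta\wedge\beta+\beta\wedge\theta_\alpha\wedge\alpha=-(\theta_\beta-\theta_\alpha)\wedge\alpha\wedge\beta$, but this is harmless since the expression vanishes.
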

This lemma was originally proved in \cite{MR1934363}. As the same basic arguments reappear later, we include the proof here for completeness.

\begin{proof}
Since $\alpha$ and $\beta$ are integrable, there exist rational one-forms $\theta_\alpha$ and $\theta_\beta$ such that
\(
d\alpha=\theta_\alpha\wedge\alpha
\)
and
\(
d\beta=\theta_\beta\wedge\beta.
\)
The integrability of $\alpha+\beta$ gives
\[
\alpha\wedge d\beta+\beta\wedge d\alpha=0.
\]
Substituting the preceding expressions, one obtains
\[
(\theta_\beta-\theta_\alpha)\wedge\alpha\wedge\beta=0.
\]
Thus
\(
\theta_\beta-\theta_\alpha=a\alpha+b\beta
\)
for some rational functions $a,b$. Setting
\(
\theta_{\alpha\beta}=\theta_\alpha+a\alpha=\theta_\beta-b\beta
\)
gives the required one-form. If two one-forms have the stated property, their difference wedges trivially with both $\alpha$ and $\beta$; since $\alpha\wedge\beta\neq0$, the difference is zero. This proves uniqueness.
\end{proof}

\begin{proposition}\label{P:plane of integrables}
Let
\(
W=\C\omega_0\oplus\cdots\oplus\C\omega_q
\)
be a vector space of rational one-forms on $X$ such that $\operatorname{rank}(W)\geq3$ and $\Int_W=W$. Then there exists a closed rational one-form
\(
\theta\in H^0(X,\Omega^1_X\otimes\Mer_X)
\)
satisfying
\[
\theta\wedge\omega=d\omega
\qquad\text{for every }\omega\in W.
\]
\end{proposition}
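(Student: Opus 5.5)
Since $\operatorname{rank}(W)\geq 3$, I will first pick three elements of $W$ that are linearly independent over $\C(X)$, say (after relabelling) $\omega_0,\omega_1,\omega_2$ with $\omega_0\wedge\omega_1\wedge\omega_2\neq0$. Every pencil inside $W$ consists of integrable forms because $\Int_W=W$. So Lemma~\ref{l:theta pencil} applies to each pair $(\omega_i,\omega_j)$ with $\omega_i\wedge\omega_j\neq0$, and produces unique one-forms $\theta_{ij}$ with $d\omega_i=\theta_{ij}\wedge\omega_i$ and $d\omega_j=\theta_{ij}\wedge\omega_j$.

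The first step is to show that $\theta_{01}=\theta_{02}=\theta_{12}$. Both $\theta_{01}$ and $\theta_{02}$ satisfy $d\omega_0=\theta\wedge\omega_0$, so $\theta_{01}-\theta_{02}=h\,\omega_0$ for some rational $h$. To pin down $h$, I apply the lemma to the pair $(\omega_0,\omega_1+\omega_2)$, which is integrable as a pencil and satisfies $\omega_0\wedge(\omega_1+\omega_2)\neq0$. This gives a form $\theta'$ with
\[
d\omega_1+d\omega_2=\theta'\wedge(\omega_1+\omega_2).
\]
Similarly, the pair $(\omega_1,\omega_2)$ gives $d\omega_1+d\omega_2=\theta_{12}\wedge(\omega_1+\omega_2)$. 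Hence $\theta'-\theta_{12}=k(\omega_1+\omega_2)$. Now write $\theta_{01}=\theta_{12}+a\omega_1$ and $\theta_{02}=\theta_{12}+b\omega_2$; these hold because of the shared relations for $d\omega_1$ and $d\omega_2$. Also $\theta'=\theta_{01}+c\,\omega_0$. Comparing the two expressions for $\theta'$ and using the $\C(X)$-independence of $\omega_0,\omega_1,\omega_2$ forces $c=0$ and $a=k$. Running the same argument with $\theta'$ expressed through $\theta_{02}$ forces $b=k$. Combined with $\theta_{01}-\theta_{02}\in\C(X)\omega_0$, this gives $a\omega_1-b\omega_2\in\C(X)\omega_0$, hence $a=b=0$. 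So there is a single $\theta$ working for $\omega_0,\omega_1,\omega_2$ and, by the same computation, for every $\C$-linear combination of them. This bookkeeping is the step I expect to need the most care: it is exactly where rank $\geq3$ is used, and the analogous statement fails for pencils.

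Next I extend $\theta$ to all of $W$. For any $\omega\in W$, choose two of $\omega_0,\omega_1,\omega_2$, say $\omega_i,\omega_j$, such that $\omega,\omega_i,\omega_j$ are $\C(X)$-independent. If no such pair exists, $\omega$ lies in the $\C(X)$-span, and I handle it by replacing $\omega$ with $\omega+\omega_k$ and using linearity of $d$. Repeating the triple argument for $\{\omega,\omega_i,\omega_j\}$ gives a common form $\tilde\theta$. Uniqueness for the pencil $(\omega_i,\omega_j)$ then forces $\tilde\theta=\theta$. By linearity, $d\omega=\theta\wedge\omega$ on all of $W$.

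Finally, I prove closedness. Differentiating $d\omega_i=\theta\wedge\omega_i$ gives
\[
0=d\theta\wedge\omega_i-\theta\wedge\theta\wedge\omega_i=d\theta\wedge\omega_i
\]
for $i=0,1,2$. A two-form whose wedge with each of three $\C(X)$-independent one-forms vanishes must be zero: $d\theta\wedge\omega_0=0$ gives $d\theta=\omega_0\wedge\eta$, and then $\omega_0\wedge\eta\wedge\omega_1=0=\omega_0\wedge\eta\wedge\omega_2$ forces $\eta\in\C(X)\omega_0$, so $d\theta=0$.
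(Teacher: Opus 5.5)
Your proof is correct and follows the paper's route. Like the paper, you obtain a common $\theta$ for a $\C(X)$-independent triple from the pairwise forms of Lemma~\ref{l:theta pencil}, extend it to all of $W$ via uniqueness for a shared pencil, and get closedness from $d\theta\wedge\omega_i=0$ for three independent $\omega_i$. Two simplifications: the detour through the pencil $(\omega_0,\omega_1+\omega_2)$ is unnecessary, since your final relation $a\omega_1-b\omega_2=h\omega_0$ already forces $a=b=h=0$ (this is exactly the paper's cyclic argument); and the ``no such pair'' case in your extension step can only occur for $\omega=0$.
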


\begin{proof}
Choose three elements $\omega_0,\omega_1,\omega_2\in W$ such that
\(
\omega_0\wedge\omega_1\wedge\omega_2\neq0.
\)
For each pair of distinct indices $i,j\in\{0,1,2\}$, the pencil
\(
\omega_i+t\omega_j
\)
consists of integrable one-forms. By Lemma~\ref{l:theta pencil}, there exists a unique rational one-form $\theta_{ij}$ such that
\[
\theta_{ij}\wedge\omega_i=d\omega_i,
\qquad
\theta_{ij}\wedge\omega_j=d\omega_j.
\]

For distinct $i,j,k$, the identity
\[
(\theta_{ij}-\theta_{ki})\wedge\omega_i=0
\]
implies that $\theta_{ij}-\theta_{ki}$ is proportional to $\omega_i$. Thus there are rational functions $K_0,K_1,K_2$ such that
\[
\begin{aligned}
\theta_{01}-\theta_{20}&=K_0\omega_0,\\
\theta_{12}-\theta_{01}&=K_1\omega_1,\\
\theta_{20}-\theta_{12}&=K_2\omega_2.
\end{aligned}
\]
Summing these equations gives
\[
K_0\omega_0+K_1\omega_1+K_2\omega_2=0.
\]
Since $\omega_0,\omega_1,\omega_2$ are linearly independent over the vector space of rational functions of $X$, it follows that $K_0=K_1=K_2=0$. Hence
\(
\theta_{01}=\theta_{12}=\theta_{20}
\);
we denote this common form by $\theta$.

We next show that the same $\theta$ works for every element of $W$. Let $\eta\in W$ be non-zero. Since $\omega_0,\omega_1,\omega_2$ have rank $3$, at least two of them, say $\omega_i$ and $\omega_j$, are such that
\(
\omega_i\wedge\omega_j\wedge\eta\neq0
\), unless $\eta$ is already a constant linear combination of $\omega_i$ and $\omega_j$, in which case the assertion follows by linearity. Applying the preceding argument to the triple $(\omega_i,\omega_j,\eta)$, and using the uniqueness in Lemma~\ref{l:theta pencil} for the common pencil spanned by $\omega_i$ and $\omega_j$, we obtain
\(
d\eta=\theta\wedge\eta.
\)
Thus $\theta\wedge\omega=d\omega$ for every $\omega\in W$.

It remains to prove that $\theta$ is closed. From $d\omega_i=\theta\wedge\omega_i$ and $d^2\omega_i=0$, one obtains
\[
d\theta\wedge\omega_i=0,
\qquad i=0,1,2.
\]
Therefore $d\theta$ may be written both as
\(
a_{01}\omega_0\wedge\omega_1
\)
and as
\(
a_{12}\omega_1\wedge\omega_2
\)
for suitable rational functions $a_{01},a_{12}$. Wedging the first expression with $\omega_2$ gives
\[
a_{01}\omega_0\wedge\omega_1\wedge\omega_2=0.
\]
Since the triple wedge is non-zero, $a_{01}=0$, and therefore $d\theta=0$.
\end{proof}

Proposition~\ref{P:plane of integrables} proves Theorem~\ref{T:plane int}. It also shows that the passage from pencils to higher-dimensional linear spaces imposes a strong transverse affine rigidity. This does not mean, however, that any of the foliations defined by elements in $W$ themselves must be defined by a closed one-form; the following example shows that the general conclusion cannot be strengthened in that direction.

\begin{example}\label{ex:plano de integraveis}
Consider the following one-forms, defined in an affine chart of $\p^1\times\cdots\times\p^1$:
\[
\omega_i=\frac{x^3}{3}\,dy_i+(x+y_i)\,dx,
\qquad
\eta=\left(\frac{3}{x}-\frac{3}{x^3}\right)dx.
\]
Then $\eta$ is closed and, for every $i\leq q$, one has
\(
d\omega_i=\eta\wedge\omega_i.
\)
If $W$ is the $\C$-vector space spanned by the $\omega_i$, then $\Int_W=W$. Nevertheless, none of the one-forms $\omega_i$ admits an integrating factor. Thus these foliations admit transversely affine structures, but they are not defined by closed one-forms, see \cite[Example 2.8]{MR3294560}.
\end{example}

\subsection{Other integrable sets}
When studying different configurations for $\mathbb{P}(\operatorname{Int}_W)$, the $1$-forms $\theta_t$ satisfying $\theta_t \wedge \omega_t = d\omega_t$ typically vary with $\omega_t$. To proceed analogously to the case of linear spaces, we consider the foliation $\G$ defined by $W$. For any vector field $v$ tangent to $\G$, we obtain
\[
\mathcal{L}_v \omega_t = \theta_t(v)\,\omega_t.
\]
Whenever a collection of $1$-forms in $\mathbb{P}(\operatorname{Int}_W)$ satisfies a nontrivial linear dependence relation, the values $\theta_t(v)$ become independent of $t$. Equivalently, the restrictions $\theta_t|_{T\G}$ coincide for all such $t$. In this situation, the partial connection
\[
\nabla: N^*_{\G} \otimes \Mer_X \longrightarrow \hom(T\G, N^*_{\G} \otimes \Mer_X),\qquad \omega \longmapsto \mathcal{L}_v(\omega).
\]
known as the Bott connection for the conormal bundle of $\G$, becomes diagonalizable. As a special consequence of the study of Bott connections whose connection matrix is a multiple of the identity, we have \cite[Theorem B]{barbosa2025unlikely}. From this result, we obtain the following corollary.

\begin{theorem}\label{T:4points}
    Let $X$ be a projective manifold of dimension at least four.
    Let $W$ be a three-dimensional vector space of $1$-forms on $X$ and suppose that $\p(\operatorname{Int}_W)$ has at least four points in general position. Then every foliation defined by a element of $\p(\operatorname{Int}_W)$ either admits a transversely projective structure, or is a pullback from a foliation on a projective manifold $Y$, where $\dim Y \le 3$.
\end{theorem}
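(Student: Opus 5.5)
We plan to reduce to the situation of Proposition~\ref{prop:invariance_axis_firstint} and then use the Bott connection discussion preceding the statement together with \cite[Theorem B]{barbosa2025unlikely}.

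\textbf{Normalisation.} Let $[\omega_0],[\omega_1],[\omega_2],[\omega_3]\in\p(\Int_W)$ be four points in general position. Any three of them are then a basis of $W$. After rescaling the representatives we may assume
\[
\omega_0+\omega_1+\omega_2+\omega_3=0 .
\]
So $\omega_1,\omega_2,\omega_3$ are integrable, and their sum $-\omega_0$ is also integrable.

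We split into cases according to $\operatorname{rank}(W)$.

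\textbf{Case $\operatorname{rank}(W)\le 2$.} Then $W$ defines a distribution $\G$ of codimension at most two. If $\omega_1\wedge\omega_2=0$, the four forms all define the same foliation, which is therefore defined by at most one form. In that case we need a separate, easy argument: integrability of the span yields a closed $\theta$ as in Lemma~\ref{l:theta pencil}, hence a transversely affine structure. Otherwise $\G$ has codimension two, and one checks that it is integrable. Each $\F_i$ contains $\G$, so the $\F_i$ are pulled back from foliations of the surface-type leaf space of $\G$. This is handled by the rational-map formulation in \cite{barbosa2025unlikely}, giving $\dim Y\le 3$, indeed $\le 2$ generically.

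\textbf{Case $\operatorname{rank}(W)=3$ (main case).} Here $\G$ is the codimension three distribution defined by $\omega_1\wedge\omega_2\wedge\omega_3$. For each $t$ choose $\theta_t$ with $d\omega_t=\theta_t\wedge\omega_t$.

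First we verify that $\G$ is integrable. For $v,w\in T\G$ we have
\[
\omega_t([v,w])=-d\omega_t(v,w)=-(\theta_t\wedge\omega_t)(v,w)=0 .
\]

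Next, for $v\in T\G$ we get $\mathcal L_v\omega_t=\theta_t(v)\,\omega_t$. Apply $\mathcal L_v$ to the relation $\sum_{t=0}^3\omega_t=0$:
\[
\sum_{t=0}^3\theta_t(v)\,\omega_t=0 .
\]
Since $\omega_1,\omega_2,\omega_3$ are $\C(X)$-independent and $\omega_0=-(\omega_1+\omega_2+\omega_3)$, this gives $\theta_t(v)=\theta_0(v)$ for all $t$. Hence the Bott partial connection on $N^*_\G\otimes\Mer_X$ has connection matrix $\theta_0|_{T\G}\cdot\mathrm{Id}$ in the frame $\omega_1,\omega_2,\omega_3$. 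That is exactly the hypothesis of \cite[Theorem B]{barbosa2025unlikely}.

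\textbf{Applying Theorem B.} We expect it to give a dichotomy:
\begin{enumerate}
\item the scalar Bott connection extends to a flat connection, i.e.\ there is a single $\theta$ with $d\omega=\theta\wedge\omega$ modulo the appropriate terms, up to a flat correction on all of $W$; or
\item $\G$ has a nontrivial field of first integrals, so $\G$ is (up to a rational map) the fibration by fibres of $X\dashrightarrow Y$ with $\dim Y\le 3$, and each $\F_t\supset\G$ is pulled back from $Y$.
\end{enumerate}
Proposition~\ref{prop:invariance_axis_firstint} connects the two alternatives: ratios of the functions $f_i$ in integrable combinations are first integrals of $\G$. The hypothesis $\dim X\ge4$ guarantees that $\G$ is a genuine codimension-three foliation with positive-dimensional leaves, so that the pull-back alternative is meaningful.

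In case (1) we build a Godbillon--Vey sequence of length at most $3$ for each $\omega_t$, using $\theta$ and the second-order term coming from $d\theta_t$. This yields a transversely projective structure.

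\textbf{Main obstacle.} The hardest step is converting the output of Theorem B into a length-$3$ Godbillon--Vey sequence. One must control $d\theta_t$, which need not vanish: we would show that $d\theta_t=\omega_t\wedge\eta_t$ and then verify $d\eta_t=\eta_t\wedge\theta_t$, adjusting by multiples of $\omega_t$. The rank $\le2$ degenerate cases also need care, but they should be routine.
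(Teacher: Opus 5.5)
Your main-case reduction is exactly the paper's argument. You normalise to $\omega_0+\omega_1+\omega_2+\omega_3=0$ and use $\mathcal L_v\omega_t=\theta_t(v)\,\omega_t$ for $v\in T\G$. The linear relation then forces $\theta_t|_{T\G}$ to be independent of $t$, so the Bott partial connection on $N^*_{\G}\otimes\Mer_X$ is a scalar multiple of the identity. The paper then reads off the stated dichotomy directly as a corollary of \cite[Theorem B]{barbosa2025unlikely}.

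The gap is in what you do after that point. You replace the conclusion of Theorem B by a guessed dichotomy that does not yield the statement, and you leave the construction of a length-three Godbillon--Vey sequence as an unresolved ``main obstacle''. Your plan for it does not advance the problem:
\begin{itemize}
\item $d\theta_t=\omega_t\wedge\eta_t$ is automatic, since $d^2\omega_t=0$ gives $d\theta_t\wedge\omega_t=0$.
\item $d\eta_t=\eta_t\wedge\theta_t$ (after the allowed modifications of $\theta_t,\eta_t$) is exactly the condition for a length-three sequence, i.e.\ the whole problem. Neither the scalar Bott connection nor Proposition~\ref{prop:invariance_axis_firstint} produces it.
\end{itemize}
Your alternative~(2) also tacitly assumes that $\G$ is algebraically integrable. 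If its field of first integrals has transcendence degree $1$ or $2$, the fibres of the induced map $X\dashrightarrow Y$ strictly contain the leaves of $\G$. Then $\F_t\supset\G$ does not make $\F_t$ a pull-back from $Y$. So the main case is proved only if you cite Theorem~B for exactly the required dichotomy, as the paper does, and then your extra step is superfluous.

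The low-rank cases contain errors.
\begin{itemize}
\item \emph{Rank one.} Lemma~\ref{l:theta pencil} does not apply, since it requires $\alpha\wedge\beta\neq0$. Moreover, the hypothesis carries no information at all. Take any integrable $\omega$ and rational functions $f,g$ with $1,f,g$ linearly independent over $\C$. Then $W=\C\omega\oplus\C f\omega\oplus\C g\omega$ satisfies $\Int_W=W$, whatever the foliation defined by $\omega$, so it cannot force a transversely affine structure.
\item \emph{Rank two.} Containing the codimension-two foliation $\G$ does not make the $\F_i$ pull-backs from a surface unless $\G$ is algebraically integrable. The natural route there is to rescale three distinct $\F_i$ by rational functions into an integrable pencil (Example~\ref{ex:pencil}) and apply the pencil theorem \cite[Theorem A]{barbosa2025unlikely}.
\end{itemize}
The statement should be read with the four foliations in general position, i.e.\ $\omega_1\wedge\omega_2\wedge\omega_3\neq0$. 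The paper does this implicitly by working with the codimension-three foliation $\G$ defined by $W$. You should impose this hypothesis rather than attempt the degenerate ranks.
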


Theorem~\ref{T:4points} applies both to the case where $\mathbb{P}(\operatorname{Int}_W)$ consists of $4$ points and to the case where it is an irreducible conic. When $\dim X = 3$, although any integrable $1$-form can be arranged into a configuration of $4$ points, it remains unknown what kind of properties one obtains in the conic case.

\subsection{Bihamiltonian structures}
A Poisson structure on a manifold $X$ is a bivector field
\(
\Pi\in H^0(X,\wedge^2T_X)
\)
whose Schouten bracket with itself vanishes:
\(
[\Pi,\Pi]=0.
\)

On a projective manifold \(X\), the space $H^0(X,\wedge^2 T_X)$ of global bivector fields is finite-dimensional as a \(\mathbb{C}\)-vector space. In this setting, one can carry out an analogous study of the irreducible components of \(H^0(X,\wedge^2 T_X)\). Moreover, although the Schouten bracket behaves similarly to the operator
\[
(\omega_1,\omega_2)\longmapsto \omega_1\wedge d\omega_2+\omega_2\wedge d\omega_1
\]
on any space of $1$-forms, and so we get natural analogues for Examples \ref{ex:pencil} and \ref{ex:plano de integraveis}, it is not true in general that the product of a Poisson bivector with a function is again Poisson. Indeed, if \(\Pi\) is a Poisson bivector and \(f\in H^0(X,\mathcal O_X)\), then
\[
[f\Pi,f\Pi]
=
2f\,\Pi^\sharp(df)\wedge \Pi
+
f^2[\Pi,\Pi],
\]
where
\[
\Pi^\sharp:\Omega_X^1\longrightarrow T_X,
\qquad
\alpha\longmapsto \iota_\alpha \Pi
\]
is the contraction map induced by \(\Pi\). Since \([\Pi,\Pi]=0\), we obtain
\[
[f\Pi,f\Pi]
=
2f\,\Pi^\sharp(df)\wedge \Pi,
\]
which does not vanish in general. Hence \(f\Pi\) is not necessarily a Poisson bivector.

\begin{example}
When $X$ is a complex $3$-fold, a bivector $\pi \in H^0(X,\wedge^2 T_X)$ corresponds, under the isomorphism
\[
\wedge^2 T_X \cong \Omega^1_X \otimes \omega_X^{-1},
\]
to a twisted $1$-form
\(
\eta \in H^0(X,\Omega^1_X \otimes \omega_X^{-1}).
\)
The Poisson condition $[\Pi,\Pi]=0$ is equivalent to the integrability
condition $\eta_U \wedge d\eta_U = 0$ for $\eta$ in local trivializations.
Consequently, $\eta$ defines a codimension one holomorphic foliation
$\F$ whose normal bundle is isomorphic to $\omega_X^{-1}$.
From the exact sequence
\[
0 \to T_{\F} \to T_X \to N_{\F} \to 0,
\]
we obtain
\[
K_{\F} = K_X \otimes N_{\F} \cong \omega_X \otimes \omega_X^{-1} \cong \mathcal{O}_X.
\]
\end{example}

\subsection*{Poisson pencils}
Consider two Poisson bivectors $\Pi_1,\Pi_2\in H^0(X,\wedge^2 T_X)$. Their sum $\Pi_1+\Pi_2$ is again a Poisson bivector if and only if $[\Pi_1,\Pi_2]=0$. As illustrated in Example~\ref{ex:pencil}, the fact that three distinct Poisson structures are collinear is equivalent to the entire pencil being integrable. Such pencils are known in the literature as bihamiltonian systems.
It was proved by Gelfand and Zakharevich \cite{GELFAND1991150} that a general bihamiltonian system on a $(2k+1)$-fold corresponds to a Veronese web on the $k$-dimensional germ $(\C^{k+1},0)$. Here, generic means that the Poisson pencil has maximal rank $2k$, since Poisson structures have even-dimensional symplectic leaves.
In particular, for a Poisson pencil with maximal rank on a $5$-fold we obtain a Veronese web $\omega_0+t\omega_1+t^2\omega_2$ on a germ $(\C^3,0)$.

\subsection*{Acknowledgements}

The author is grateful to Jorge Vit\'orio Pereira for his guidance and for numerous insightful conversations. The author also acknowledges the support of the CNPq postdoctoral fellowship.

\bibliographystyle{amsplain}
\bibliography{references}

@article {MR2579869,
    AUTHOR = {Pereira, Jorge Vit\'orio and Perrone, Carlo},
     TITLE = {Germs of integrable forms and varieties of minimal degree},
   JOURNAL = {Bull. Sci. Math.},
  FJOURNAL = {Bulletin des Sciences Math\'ematiques},
    VOLUME = {134},
      YEAR = {2010},
    NUMBER = {1},
     PAGES = {1--11},
      ISSN = {0007-4497,1952-4773},
   MRCLASS = {32S65},
  MRNUMBER = {2579869},
MRREVIEWER = {M.\ G.\ Soares},
       DOI = {10.1016/j.bulsci.2009.09.005},
       URL = {https://doi.org/10.1016/j.bulsci.2009.09.005},
}

@article {MR1934363,
    AUTHOR = {Cerveau, Dominique},
     TITLE = {Pinceaux lin\'eaires de feuilletages sur {$CP(3)$}
              et conjecture de {B}runella},
   JOURNAL = {Publ. Mat.},
  FJOURNAL = {Publicacions Matem\`atiques},
    VOLUME = {46},
      YEAR = {2002},
    NUMBER = {2},
     PAGES = {441--451},
      ISSN = {0214-1493,2014-4350},
   MRCLASS = {32S65 (37F75)},
  MRNUMBER = {1934363},
MRREVIEWER = {Jorge\ Vit\'orio\ Pereira},
       DOI = {10.5565/PUBLMAT\_46202\_06},
       URL = {https://doi.org/10.5565/PUBLMAT_46202_06},
}

@article{zbMATH02064118,
 author = {Panasyuk, Andriy},
 title = {On integrability of generalized {Veronese} curves of distributions},
 fjournal = {Reports on Mathematical Physics},
 journal = {Rep. Math. Phys.},
 issn = {0034-4877},
 volume = {50},
 number = {3},
 pages = {291--297},
 year = {2002},
 language = {English},
 doi = {10.1016/S0034-4877(02)80059-3},
 zbMATH = {2064118},
 Zbl = {1042.53008}
}

@article{GELFAND1991150,
TITLE = {Webs, Veronese curves, and bihamiltonian systems},
JOURNAL = {Journal of Functional Analysis},
VOLUME = {99},
NUMBER = {1},
PAGES = {150-178},
YEAR = {1991},
ISSN = {0022-1236},
DOI = {https://doi.org/10.1016/0022-1236(91)90057-C},
URL = {https://www.sciencedirect.com/science/article/pii/002212369190057C},
AUTHOR = {Israel M Gelfand and Ilya Zakharevich},
}

@article{barbosa2025unlikely,
  title={Unlikely intersections of codimension one foliations},
  author={Barbosa, Gabriel Santos and Pereira, Jorge Vit{\'o}rio},
  journal={arXiv preprint arXiv:2505.14873},
  year={2025}
}

@article {MR2324555,
    AUTHOR = {Cerveau, Dominique and Lins-Neto, Alcides and Loray, Frank and
              Pereira, Jorge Vit\'orio and Touzet, Fr\'ed\'eric},
     TITLE = {Complex codimension one singular foliations and
              {G}odbillon-{V}ey sequences},
   JOURNAL = {Mosc. Math. J.},
  FJOURNAL = {Moscow Mathematical Journal},
    VOLUME = {7},
      YEAR = {2007},
    NUMBER = {1},
     PAGES = {21--54, 166},
      ISSN = {1609-3321,1609-4514},
   MRCLASS = {37F75 (32S65)},
  MRNUMBER = {2324555},
MRREVIEWER = {Yohann\ Genzmer},
       DOI = {10.17323/1609-4514-2007-7-1-21-54},
       URL = {https://doi.org/10.17323/1609-4514-2007-7-1-21-54},
}

@article {MR2268534,
    AUTHOR = {Bouetou, Thomas B. and Dufour, Jean P.},
     TITLE = {Veronese curves and webs: interpolation},
   JOURNAL = {Int. J. Math. Math. Sci.},
  FJOURNAL = {International Journal of Mathematics and Mathematical
              Sciences},
      YEAR = {2006},
     PAGES = {Art. ID 93142, 11},
      ISSN = {0161-1712,1687-0425},
   MRCLASS = {53A60},
  MRNUMBER = {2268534},
MRREVIEWER = {Vladislav\ Goldberg},
       DOI = {10.1155/IJMMS/2006/93142},
       URL = {https://doi.org/10.1155/IJMMS/2006/93142},
}

@article {MR3294560,
    AUTHOR = {Cousin, Ga\"el and Pereira, Jorge Vit\'orio},
     TITLE = {Transversely affine foliations on projective manifolds},
   JOURNAL = {Math. Res. Lett.},
  FJOURNAL = {Mathematical Research Letters},
    VOLUME = {21},
      YEAR = {2014},
    NUMBER = {5},
     PAGES = {985--1014},
      ISSN = {1073-2780,1945-001X},
   MRCLASS = {57R30 (32S65 53C12)},
  MRNUMBER = {3294560},
MRREVIEWER = {Cristian\ Ida},
       DOI = {10.4310/MRL.2014.v21.n5.a5},
       URL = {https://doi.org/10.4310/MRL.2014.v21.n5.a5},
}

@article {MR1432053,
    AUTHOR = {Sc\'ardua, Bruno Azevedo},
     TITLE = {Transversely affine and transversely projective holomorphic
              foliations},
   JOURNAL = {Ann. Sci. \'Ecole Norm. Sup. (4)},
  FJOURNAL = {Annales Scientifiques de l'\'Ecole Normale Sup\'erieure.
              Quatri\`eme S\'erie},
    VOLUME = {30},
      YEAR = {1997},
    NUMBER = {2},
     PAGES = {169--204},
      ISSN = {0012-9593},
   MRCLASS = {32L30},
  MRNUMBER = {1432053},
MRREVIEWER = {M.\ G.\ Soares},
       DOI = {10.1016/S0012-9593(97)89918-1},
       URL = {https://doi.org/10.1016/S0012-9593(97)89918-1},
}

@book {MR537038,
    AUTHOR = {Jouanolou, Jean-Pierre},
     TITLE = {\'Equations de {P}faff alg\'ebriques},
    SERIES = {Lecture Notes in Mathematics},
    VOLUME = {708},
 PUBLISHER = {Springer, Berlin},
      YEAR = {1979},
     PAGES = {v+255},
      ISBN = {3-540-09239-0},
   MRCLASS = {14D05 (32C40 34A20 35A20 58A17)},
  MRNUMBER = {537038},
MRREVIEWER = {V.\ A.\ Golubeva},
}
\end{document}